\setlist[itemize]{noitemsep, topsep=1pt, leftmargin=20pt}
\newcommand\bcdot{\ensuremath{
		\mathchoice
		{\mskip\thinmuskip\lower0.2ex\hbox{\scalebox{1.6}{$\cdot$}}\mskip\thinmuskip}}
	{\mskip\thinmuskip\lower0.2ex\hbox{\scalebox{1.6}{$\cdot$}}\mskip\thinmuskip}
	{\lower0.3ex\hbox{\scalebox{1.2}{$\cdot$}}}
	{\lower0.3ex\hbox{\scalebox{1.2}{$\cdot$}}}
}
\theoremstyle{plain}
\newtheorem{theo}{Theorem}[section]
\theoremstyle{definition}
\theoremstyle{plain}
\newtheorem{lemma}[theo]{Lemma}
\newtheorem{proposition}[theo]{Proposition}
\theoremstyle{definition}
\newtheorem{remark}[theo]{Remark}
\theoremstyle{plain}
\newtheorem{thmint}{Theorem}
\newtheorem{corint}[thmint]{Corollary}
\theoremstyle{definition}
\newtheorem*{definition*}{Definition}
\DeclareSymbolFontAlphabet{\mathbb}{AMSb}
\DeclareSymbolFontAlphabet{\mathbbl}{bbold}
\title[]{The pluriclosed flow and the Vaisman condition}
\author{Giuseppe Barbaro}
\address[Giuseppe Barbaro]{Department of Mathematics, Aarhus University, Ny Munkegade 118, 8000 Aarhus C, Denmark}
\email{g.barbaro@math.au.dk}
\author{Francesco Pediconi}
\address[Francesco Pediconi]{Dipartimento di Matematica e Informatica ``Ulisse Dini'' \\ Universit\`a degli Studi di Firenze, viale Morgagni 67/a, 50134 Firenze, Italy}
\email{francesco.pediconi@unifi.it}
\author{Nicoletta Tardini}
\address[Nicoletta Tardini]{Dipartimento di Scienze Matematiche, Fisiche e Informatiche \\ Universit\`a degli Studi di Parma, Parco Area delle Scienze 53/a, 43124 Parma, Italy}
\email{nicoletta.tardini@unipr.it}
\subjclass[2020]{53C55, 53E30, 53B35}
\keywords{Pluriclosed flow, Vaisman surfaces.}
\thanks{All authors are members of GNSAGA of INdAM. The first-named author has been supported by Villum Young Investigator 0019098. The second-named and third-named authors have been supported by the PRIN 2022 project ``Real and Complex Manifolds: Geometry and Holomorphic Dynamics'' (code 2022AP8HZ9). The third-named author was supported by University of Parma through the action Bando di Ateneo 2023 per la ricerca.}
\begin{document}

\begin{abstract}
We prove that the pluriclosed flow preserves the Vaisman condition on compact complex surfaces if and only if the starting metric has constant scalar curvature.
\end{abstract}

\maketitle

\section{Introduction}
\setcounter{equation} 0

After the groundbreaking achievements of Hamilton and Perelman, geometric flows play a crucial role in Geometric Analysis and Differential Geometry. In the K\"ahler setting, the {\it K\"ahler Ricci flow} has been used to provide alternative proofs of major results, such as \cite{MR0799272, MR2535001}. Subsequently, Streets and Tian introduced the {\it pluriclosed flow} \cite{MR2673720} to apply these analytic techniques to the study of complex, non-K\"ahler manifolds. \smallskip

The pluriclosed flow evolves in the class of {\it pluriclosed metrics}, that are Hermitian metrics $\omega$ on a complex manifold $(M,J)$ satisfying ${\rm d}J{\rm d}\omega = 0$, via the parabolic equation
\begin{equation} \label{eq:PCflow}
\partial_t\, \omega(t) = -\rho(\omega(t))^{1,1} \,\, .
\end{equation}
Here, we recall that the {\it Bismut connection} of $(M,J,\omega)$ is the unique Hermitian connection with totally skew-symmetric torsion \cite{MR1006380} and $\rho(\omega)^{1,1}$ denotes the $(1,1)$-component of the {\it Bismut Ricci form} obtained by tracing the second two indexes of its curvature.

In complex dimension $2$, pluriclosed metrics are {\it Gauduchon}, hence by \cite[Th{\'e}or{\`e}me 1]{MR0470920}, they always exist in any conformal class on compact surfaces. This makes the pluriclosed flow particularly well-suited for studying the topology and geometry of compact non-K\"ahler surfaces, with important implications also in their geometrization \cite{MR4181011, MR3110582}. Up to now, the long-time behavior of the flow is well understood only in specific cases, in the presence of symmetries \cite{MR3511471, MR3957836, MR4687765, FLS24} or curvature restrictions \cite{MR3462132, MR4629758, Barb22, barbaro2023bismut}. Therefore, at this stage, it is natural to impose additional conditions to understand the pluriclosed flow solutions better. In this paper, we address the evolution of Vaisman metrics. \smallskip

We recall that, on a complex manifold $(M^{2n},J)$, we can associate to any Hermitian metric $\omega$ a $1$-form $\theta$, called {\it Lee form}, uniquely determined by the condition ${\rm d}\omega^{n-1} = \theta \wedge \omega^{n-1}$. The metric $\omega$ is called {\it Vaisman} if $\theta$ is Levi-Civita parallel, non-vanishing and ${\rm d}\omega = \frac1{n-1}\theta \wedge \omega$. We refer to \cite{MR4771164} and the references therein for an introduction to Vaisman metrics.

In complex dimension $n=2$, Vaisman metrics are pluriclosed. Moreover, compact Vaisman surfaces have been classified in \cite{MR1760667}: they are properly elliptic surfaces, Kodaira surfaces, and Hopf surfaces of class $1$. The first natural question is whether or not the flow preserves this class of metrics. In \cite{MR4257077}, it is proved that this condition is preserved if the initial Vaisman metric has constant scalar curvature. Furthermore, by \cite{MR4732911, MR4760212}, the pluriclosed flow preserves $\mathsf{T}^2$-invariant Vaisman metrics on the Kodaira-Thurston surface if and only if the initial metric has constant scalar curvature. Notice that, for Vaisman surfaces, the Riemannian scalar curvature and the {\it Bismut scalar curvature} differ by constants. So, the nomenclature {\it constant scalar curvature} indicates that both are constant. The main result of this paper is the following.

\begin{thmint} \label{thm:MAIN}
Let $(M^4, J, \omega_0)$ be a compact Vaisman surface and let $\{\omega(t)\}_{t \in [0,T)}$ be the solution to the pluriclosed flow starting from $\omega_0$. Then, $\omega(t)$ is Vaisman for all $t \in [0,T)$ if and only if $\omega_0$ has constant scalar curvature.
\end{thmint}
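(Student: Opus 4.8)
The plan is to take the ``if'' implication as known, since it is precisely the content of \cite{MR4257077}, and to concentrate on the converse. Thus I assume that the solution $\{\omega(t)\}_{t\in[0,T)}$ remains Vaisman and aim to deduce that $\omega_0$ has constant scalar curvature. The starting point is the rigidity of a compact Vaisman surface: the Lee form $\theta$ is parallel and non-vanishing, so the dual Lee field $\xi = \theta^\sharp$ and the Reeb field $J\xi$ span a totally geodesic Riemannian foliation $\mathcal F$ whose transverse geometry is K\"ahler of complex dimension one, i.e.\ a (possibly orbifold) Riemann surface equipped with a transverse K\"ahler form $\omega_T$. In these terms the metric reconstructs, up to normalization, as $\omega = \omega_T + |\theta|^{-2}\,\theta\wedge J\theta$, and the Riemannian scalar curvature of $\omega$ differs from the transverse scalar curvature $s_T$ of $\omega_T$ only by a constant fixed by $|\theta|$; in particular $\omega$ has constant scalar curvature if and only if $\omega_T$ does.

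First I would compute the $(1,1)$-part of the Bismut Ricci form of a Vaisman metric in this normal form. The expected outcome is a splitting in which $\rho(\omega)^{1,1}$ equals the pullback of the transverse Ricci form $\rho_T$ plus terms proportional to the vertical two-form $\theta\wedge J\theta$ with basic coefficients. Substituting this into \eqref{eq:PCflow} and projecting along the foliation, the induced evolution of the transverse form is exactly the transverse K\"ahler--Ricci flow $\partial_t\omega_T = -\rho_T$ on the base Riemann surface, while the vertical part evolves by rescaling.

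Next I would impose that the flow stays inside the Vaisman family. Differentiating in $t$ the structural identities ${\rm d}\omega(t) = \theta(t)\wedge\omega(t)$ and $\nabla^{g(t)}\theta(t) = 0$, and substituting $\partial_t\omega = -\rho(\omega)^{1,1}$, produces a linearized compatibility condition that $\rho(\omega)^{1,1}$ must satisfy in order to be tangent to the space of Vaisman metrics. The content of this condition is that the transverse evolution must be a pure homothety, i.e.\ $\rho_T = \lambda\,\omega_T$ for a constant $\lambda$; since on a Riemann surface one always has $\rho_T = \tfrac{1}{2} s_T\,\omega_T$, this is equivalent to $s_T$, hence to the scalar curvature of $\omega_0$, being constant. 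Geometrically, a non-constant $s_T$ would force a non-homothetic rescaling of $\omega_T$ under the transverse flow, which destroys the parallelism of the Lee form and thereby the Vaisman condition.

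The main obstacle I anticipate is the explicit computation of $\rho(\omega)^{1,1}$ in the Vaisman normal form and the clean separation of its Vaisman-tangent part from the obstructing one: controlling the mixed horizontal--vertical terms and ruling out accidental cancellations is where the real work lies. A secondary, more structural point is to verify that the flow genuinely preserves the canonical foliation $\mathcal F$ and the Lee--de Rham class $[\theta]$ --- not merely the pointwise locally conformally K\"ahler property --- so that the problem legitimately reduces to the one-dimensional transverse geometry, where ``constant scalar curvature'' and the homothety condition coincide.
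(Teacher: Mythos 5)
Your overall framework --- reduce along the canonical foliation spanned by the Lee and anti-Lee fields, use that the Bismut Ricci form is horizontal so the flow only rescales the transverse form, and conclude by showing the rescaling factor must be spatially constant --- is indeed the skeleton of the paper's argument. But the decisive step is missing. You assert that ``the content of this condition is that the transverse evolution must be a pure homothety, i.e.\ $\rho_T=\lambda\,\omega_T$ for a constant $\lambda$,'' and justify it by saying that a non-constant $s_T$ ``destroys the parallelism of the Lee form.'' That is exactly the statement to be proved, not an argument for it: on a transverse Riemann surface \emph{every} evolution of $\omega_T$ is conformal, so nothing in the shape of the equation distinguishes a constant conformal factor from a non-constant one. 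The linearized compatibility condition is never computed, and the route you propose for computing it --- differentiating $\nabla^{g(t)}\theta(t)=0$ in $t$ --- requires the variation of the Levi--Civita connection and is far heavier than necessary. The paper avoids this entirely: it first proves an algebraic characterization (a $\mathfrak{t}$-invariant metric $\omega=\lambda\,\mu_1\wedge\mu_2+\check\omega$ is Vaisman iff $\lambda$ and the functions $\sigma_i$ defined by ${\rm d}\mu_i=\sigma_i\check\omega$ are spatially constant), then uses the time-independence of the basic characteristic classes $[{\rm d}\mu_i]$ to get $\sigma_1(t)\neq0$, $\sigma_2(t)=0$, and finally differentiates ${\rm d}(\lambda\mu_1\wedge\mu_2)$ in $t$ to obtain $\tfrac12\lambda'\sigma_1+\lambda\sigma_1'-\lambda\sigma_1 s=0$; since everything except $s$ is spatially constant, so is $s$. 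A cheaper closure of your own plan is available and worth noting: differentiating ${\rm d}\omega=\theta\wedge\omega$ and using $\partial_t\omega=-s\,\check\omega$ with ${\rm d}s$ basic gives $\partial_t\theta=s\,\theta$; then ${\rm d}\theta(t)=0$ for all $t$ forces ${\rm d}s\wedge\theta=0$, and since ${\rm d}s$ is horizontal while $\theta$ is vertical and nowhere zero, ${\rm d}s=0$. Either way, some such computation must actually be carried out.

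Two further points you flag but do not resolve also need care. First, the leaf space of the canonical foliation need not be a smooth surface, so ``the transverse K\"ahler--Ricci flow on the base Riemann surface'' is not literally available; the paper works instead with basic forms and basic cohomology in the sense of Alekseevsky--Michor, which is what makes the characteristic-class argument legitimate. Second, the preservation of the foliation is not something to verify along the flow of the \emph{evolving} Lee field: one fixes once and for all the abelian algebra $\mathfrak{t}$ generated by $\theta_0^{\sharp_0}$ and $J\theta_0^{\sharp_0}$ (holomorphic and Killing for $\omega_0$) and uses that the pluriclosed flow preserves $\mathfrak{t}$-invariance; the identity of $\theta(t)$ as a vertical form for this \emph{fixed} $\mathfrak{t}$ is then a consequence, not a hypothesis.
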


To prove this theorem, we use the fact that the {\it Lee vector field} $\theta_0^{\sharp_0}$ and the {\it anti-Lee vector field} $J\theta_0^{\sharp_0}$ of the initial metric $\omega_0$ are both holomorphic and Killing. Therefore, they give rise to a free action of a complex abelian Lie algebra $\mathfrak{t}$, which preserves both the complex structure $J$ and the initial Vaisman metric $\omega_0$ (see Section \ref{sect:prel} for the relevant definitions). Consequently, $\omega(t)$ is $\mathfrak{t}$-invariant for any $t \in [0,T)$. We remark that, in general, this Lie algebra action does not integrate to a compact Lie group action, so one cannot use the theory of principal bundles directly. However, the action of $\mathfrak{t}$ is enough to perform a {\it dimensional reduction} argument. Therefore, we obtain the result using characteristic classes, defined as in \cite[Proposition 7.5]{MR1362865}, and the fact that the transverse geometry has real dimension $2$. \smallskip

We remark that by \cite[Proposition 6.2]{NiZh24}, a Vaisman metric on a compact surface has constant scalar curvature if and only if its Bismut connection is {\it Ambrose-Singer}, namely, it has parallel curvature and parallel torsion. Consequently, such metrics are locally homogeneous, and so Theorem \ref{thm:MAIN} immediately implies the following.

\begin{corint} \label{cor:MAIN}
Given a compact Vaisman surface $(M^4, J, \omega_0)$, the solution $\{\omega(t)\}_{t \in [0,T)}$ to the pluriclosed flow starting from $\omega_0$ is Vaisman for all $t \in [0,T)$ if and only if it is locally homogeneous.
\end{corint}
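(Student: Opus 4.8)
The plan is to deduce Corollary \ref{cor:MAIN} from Theorem \ref{thm:MAIN} by threading together the scalar-curvature characterization of that theorem with the Ambrose--Singer description of local homogeneity. Concretely, I would organize the argument as a chain of equivalences in which the analytically substantial content---that preservation of the Vaisman condition is equivalent to $\omega_0$ having constant scalar curvature---is supplied entirely by Theorem \ref{thm:MAIN}; what remains is to match ``constant scalar curvature'' with ``locally homogeneous''.

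For the forward implication I would assume that $\omega(t)$ is Vaisman for every $t \in [0,T)$. By Theorem \ref{thm:MAIN} this forces $\omega_0$ to have constant scalar curvature, and then \cite[Proposition 6.2]{NiZh24} identifies this condition with the Bismut connection of $(M,J,\omega_0)$ being Ambrose--Singer, i.e.\ having parallel torsion and parallel curvature. Since the Bismut connection is a metric connection that moreover preserves $J$, the Ambrose--Singer theorem yields that $(M,J,\omega_0)$ is locally homogeneous as a Hermitian manifold (the parallel tensors preserved by the local automorphisms include $g$, $J$, and hence $\omega$). Applying Theorem \ref{thm:MAIN} to the solution restarted at an arbitrary time $t$ shows that each $\omega(t)$ likewise has constant scalar curvature, so the very same argument makes every $\omega(t)$ locally homogeneous.

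For the converse I would start from local homogeneity of the solution. Local isometries preserve every Riemannian curvature invariant, so in particular the Riemannian scalar curvature of $g_0$ is constant; as recalled in the introduction, on a Vaisman surface the Riemannian and the Bismut scalar curvatures differ only by a constant, whence $\omega_0$ has constant scalar curvature in the required sense. Theorem \ref{thm:MAIN} then guarantees that $\omega(t)$ remains Vaisman for all $t \in [0,T)$, closing the equivalence.

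The only genuine subtlety---rather than a real obstacle, since Theorem \ref{thm:MAIN} carries the weight---is the bookkeeping about the precise notion of local homogeneity: the Ambrose--Singer theorem a priori delivers homogeneity of the underlying Riemannian manifold, and one must check that the canonical connection produced by \cite[Proposition 6.2]{NiZh24} is compatible with the complex structure, so that the resulting local automorphisms are holomorphic isometries. This is immediate from the Hermitian nature of the Bismut connection, but it is exactly the point that upgrades the statement to one about Vaisman (hence Hermitian) structures rather than merely about the associated metric.
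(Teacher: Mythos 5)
Your proposal is correct and follows essentially the same route as the paper: the authors also deduce the corollary directly from Theorem \ref{thm:MAIN} by invoking \cite[Proposition 6.2]{NiZh24} to identify constant scalar curvature with the Ambrose--Singer property of the Bismut connection (hence local homogeneity), and use the elementary converse that local homogeneity forces constant scalar curvature. Your extra remarks on restarting the flow and on the Hermitian nature of the Bismut connection are sound refinements of the same argument.
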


This result implies that forcing the pluriclosed flow to evolve within the class of Vaisman metrics does not provide new insights into the behavior of its solutions. Indeed, the evolution of locally homogeneous metrics on surfaces is described in \cite{MR3511471}. However, even in the non-locally homogeneous setting, the abelian symmetry given by a Vaisman structure induces a dimensional reduction that may lead to a geometric characterization. This was accomplished in the case of $\mathsf{T}^2$-principal bundles over compact Riemann surfaces in \cite{MR4348696}.

Theorem \ref{thm:MAIN} also contributes to understanding whether the pluriclosed flow preserves additional geometric conditions, following the question raised in \cite{MR4257077}. Similar problems for Hermitian curvature flows have been addressed in the literature, {\it e.g.}, in \cite{MR4030526, MR4564028, MR4288257}. We recall that, in complex dimension $2$, Vaisman metrics are the pluriclosed metrics with {\it parallel Bismut torsion} \cite[Theorem 2]{MR4554474}, while in dimension $n >2$ compact Vaisman manifolds never admit pluriclosed metrics \cite[ Theorem 3.1]{angella-otiman}. Therefore, in this setting, the next natural step is understanding if Theorem \ref{thm:MAIN} generalizes to higher dimensional pluriclosed manifolds with parallel Bismut torsion, that have been recently classified in \cite{BPT24}. \medskip

The paper is organized as follows: in Section \ref{sect:prel} we collect some preliminary facts about abelian Lie algebra actions; Section \ref{sect:mainsect} contains the proof of Theorem \ref{thm:MAIN}.

\medskip
\section{Preliminaries on abelian Lie algebra actions}
\label{sect:prel} \setcounter{equation} 0

Let $(M^{2n}, J)$ be a compact complex manifold of complex dimension $n$ and denote by $\mathfrak{X}(M)$ the infinite-dimensional Lie algebra of vector fields on $M$. Assume that $\mathfrak{t} \subset \mathfrak{X}(M)$ is an abelian $2k$-dimensional subalgebra, with $k \leq n$, satisfying the following hypothesis:
\begin{itemize}
\item[$\bcdot$] $\mathfrak{t}$ {\it preserves $J$}, namely $\mathcal{L}_VJ=0$ for any $V \in \mathfrak{t}$;
\item[$\bcdot$] $\mathfrak{t}$ is {\it free}, namely the rank of the evaluation map
$$
{\rm ev}_x : \mathfrak{t} \to T_xM \,\, , \quad {\rm ev}_x(V) \coloneqq V_x
$$
is maximal for any $x \in M$;
\item[$\bcdot$] $\mathfrak{t}$ is {\it $J$-invariant}, namely $J\mathfrak{t} = \mathfrak{t}$.
\end{itemize}
Here, $\mathcal{L}$ denotes the Lie derivative. Notice that, by hypotheses, the complex structure $J$ on $M$ induces a linear complex structure on the Lie algebra $\mathfrak{t}$, that we still denote by $J$. Moreover, $\mathfrak{t}$ gives rise to a regular $2k$-dimensional distribution $\mathcal{V} \subset TM$ by setting $\mathcal{V}_x \coloneqq {\rm ev}_x(\mathfrak{t})$, which is called {\it vertical distribution}. By the Frobenius Theorem, $\mathcal{V}$ is integrable, and each maximal leaf of $\mathcal{V}$ is called {\it $\mathfrak{t}$-orbit}. We remark that, although $M$ is locally equivalent to a principal bundle, the orbit space $M/\mathfrak{t}$ can be highly non-smooth. \smallskip

We recall that a (possibly vector-valued) tensor field $\Phi$ of type $(r,s)$ on $M$ is said to be: {\it $\mathfrak{t}$-invariant} if $\mathcal{L}_V\Phi =0$ for any $V \in \mathfrak{t}$; {\it horizontal} if $s \geq 1$ and $V \lrcorner \Phi = 0$ for any $V \in \mathfrak{t}$; {\it basic} if it is both $\mathfrak{t}$-invariant and horizontal. Notice that the exterior differential ${\rm d}f$ of a $\mathfrak{t}$-invariant function $f$ is basic. Moreover, the exterior differential ${\rm d}\alpha$ of a basic $k$-form $\alpha$ is basic, and so this allows to define the {\it basic de Rham cohomology spaces} $H^k_{\rm d}(M)_{\mathfrak{t}}$ (see, {\it e.g.}, \cite[Section 7.1]{MR1362865} and references therein). \smallskip

A smooth $\mathfrak{t}$-valued $1$-form $\mu : TM \to \mathfrak{t}$ is called {\it principal connection} if it is $\mathfrak{t}$-invariant and verifies ${\rm ev}_x(\mu_x(V_x)) = V_x$ for any $V \in \mathfrak{t}$, $x \in M$ \cite[Section 3.1]{MR1362865}. Consequently, the kernel $\mathcal{H} \coloneqq {\rm ker}(\mu) \subset TM$ is a regular $2(n{-}k)$-dimensional distribution that is transverse to $\mathcal{V}$, which is called {\it horizontal distribution}. Fix now a basis $(V_1,{\dots},V_{2k})$ for $\mathfrak{t}$ satisfying $JV_{2i-1}=V_{2i}$ for any $1 \leq i \leq k$ and consider the corresponding splitting of $\mu$:
$$
\mu = \mu_1 \otimes V_1 + {\dots} + \mu_{2k} \otimes V_{2k} \,\, .
$$
Then, it is straightforward to check that the curvature $2$-forms
\begin{equation} \label{eq:defOmega}
\Omega_i \coloneqq {\rm d}\mu_i
\end{equation}
are basic. Moreover, by \cite[Proposition 7.5]{MR1362865}, the basic cohomology classes $[\Omega_i] \in H^2_{\rm d}(M)_{\mathfrak{t}}$ do not depend on the principal connection $\mu$, and are called {\it characteristic classes of $(M,J,\mathfrak{t})$.} \smallskip

Let now $g$ be a $\mathfrak{t}$-invariant Hermitian metric on $(M,J)$. Then, it induces a principal connection $\mu$ and an orthogonal splitting $TM = \mathcal{V} + \mathcal{H}$. We denote by $\hat{g}$ and $\check{g}$ the restriction of $g$ to $\mathcal{V}$ and $\mathcal{H}$, respectively. As in the case of principal bundles, the triple $(\hat{g},\mu,\check{g})$ determines uniquely the metric $g$. According to this splitting, the fundamental $2$-form $\omega \coloneqq g(J\cdot,\cdot)$ takes the form
$$
\omega = \sum_{1 \leq i < j \leq 2k} \lambda_{ij}\, \mu_i \wedge \mu_j + \check{\omega} \,\, ,
$$
where $\check{\omega} \coloneqq \check{g}(J\cdot,\cdot)$ is a basic $2$-form and $\lambda_{ij} \coloneqq \hat{g}(JV_i,V_j)$ are $\mathfrak{t}$-invariant functions. In the following, we will call {\it metric} both the tensors $g$ and $\omega$. When a $\mathfrak{t}$-invariant Hermitian metric is fixed, a vector field $X \in \mathfrak{X}(M)$ is called: {\it horizontal} if $X_x \in \mathcal{H}_x$ for any $x \in M$; {\it basic} if it is both $\mathfrak{t}$-invariant and horizontal.

\medskip
\section{The Vaisman condition along the pluriclosed flow}
\label{sect:mainsect} \setcounter{equation} 0

\subsection{Invariant geometry of compact Vaisman surfaces} \hfill \par

Let $(M^4, J)$ be a compact complex surface and $\mathfrak{t} \subset \mathfrak{X}(M)$ be an abelian $2$-dimensional subalgebra of holomorphic vector fields on $M$. We assume that $\mathfrak{t}$ is free and $J$-invariant. Fix also a basis $(V_1,V_2)$ for $\mathfrak{t}$ satisfying $JV_1 = V_2$.

Let now $g$ be a $\mathfrak{t}$-invariant Hermitian metric on $(M,J)$. Consider the splitting
\begin{equation} \label{eq:generalomega}
\omega = \lambda\,\mu_1 \wedge \mu_2 +\check{\omega}
\end{equation}
and the curvature $2$-forms $\Omega_1 = {\rm d}\mu_1$, $\Omega_2 = {\rm d}\mu_2$ as in Section \ref{sect:prel}.

\begin{lemma}
The $2$-form $\check{\omega}$ is closed, namely
\begin{equation} \label{eq:transK}
{\rm d}\check{\omega} = 0 \,\, .
\end{equation}
Moreover, there exists a unique choice of $\mathfrak{t}$-invariant, $\mathbb{R}$-valued, smooth functions $\sigma_1$, $\sigma_2$ on $M$ such that
\begin{equation} \label{eq:Omegasigma}
\Omega_1 = \sigma_1 \,\check{\omega} \,\, , \quad \Omega_2 = \sigma_2 \,\check{\omega} \,\, .
\end{equation}
In particular, the $2$-forms $\Omega_i$ are of type $(1,1)$, namely
\begin{equation} \label{eq:type(1,1)}
J\Omega_1 = \Omega_1 \,\, , \quad J\Omega_2 = \Omega_2 \,\, .
\end{equation}
\end{lemma}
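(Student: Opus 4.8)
The plan is to reduce all three assertions to pointwise linear algebra on the horizontal distribution $\mathcal{H} = \ker(\mu)$, exploiting that here the transverse geometry is real two-dimensional. The structural fact driving everything is that $\mathcal{H}$ is a complex line. Indeed, since $\mathfrak{t}$ is $J$-invariant the vertical distribution $\mathcal{V}$ is $J$-invariant, and as $g$ is Hermitian its $g$-orthogonal complement $\mathcal{H}$ inherits this invariance; thus $(\mathcal{H}, J|_{\mathcal{H}})$ is a real two-dimensional $J$-invariant space, i.e.\ a complex line. Consequently, identifying basic $p$-forms with the $\mathfrak{t}$-invariant sections of $\Lambda^p\mathcal{H}^*$, one has that basic $p$-forms vanish for $p \geq 3$, while basic $2$-forms form a rank-one module over the $\mathfrak{t}$-invariant functions.

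The closedness \eqref{eq:transK} is then immediate: $\check{\omega}$ is basic by construction, so $\mathrm{d}\check{\omega}$ is again basic (Section~\ref{sect:prel}); but it is a basic $3$-form, hence it vanishes since $\mathcal{H}$ has rank two, with no computation required.

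For \eqref{eq:Omegasigma}, I would use that $\check{\omega} = \check{g}(J\cdot,\cdot)$ is the transverse area form, hence a nowhere-vanishing generator of the rank-one bundle $\Lambda^2\mathcal{H}^*$. Each $\Omega_i = \mathrm{d}\mu_i$ is a basic $2$-form (Section~\ref{sect:prel}), so defining $\sigma_i$ as the ratio of the values of $\Omega_i$ and $\check{\omega}$ on a local horizontal orthonormal frame yields the unique smooth function with $\Omega_i = \sigma_i\,\check{\omega}$; it is $\mathfrak{t}$-invariant because both $\Omega_i$ and $\check{\omega}$ are. Finally, \eqref{eq:type(1,1)} follows at once: on a complex line there are no nonzero $(2,0)$- or $(0,2)$-forms, so $\check{\omega}$ satisfies $J\check{\omega} = \check{\omega}$, whence $J\Omega_i = \sigma_i\, J\check{\omega} = \Omega_i$; the identity is trivial when one argument is vertical, by horizontality.

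I do not expect a genuine obstacle here: the only steps needing care are the $J$-invariance of $\mathcal{H}$ and the nondegeneracy of $\check{\omega}$, both direct consequences of $g$ being Hermitian. The whole content lies in the dimensional reduction, namely in the rank of $\mathcal{H}$ being two.
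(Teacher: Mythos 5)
Your proof is correct and follows essentially the same route as the paper's: both arguments rest entirely on the fact that the horizontal distribution $\mathcal{H}$ has real rank two (so basic $3$-forms vanish, basic $2$-forms are multiples of the nowhere-vanishing transverse area form $\check{\omega}$, and $\mathcal{H}$ being a complex line forces type $(1,1)$). The paper compresses all of this into the phrase ``for dimensional reasons''; you have simply spelled out those reasons.
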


\begin{proof}
The $3$-form ${\rm d}\check{\omega}$ is basic, and so $V \lrcorner {\rm d}\check{\omega} = 0$ for any $V \in \mathfrak{t}$. Moreover, if $X,Y,Z$ are horizontal vector fields, then ${\rm d}\check{\omega}(X,Y,Z) = 0$ for dimensional reasons. This proves \eqref{eq:transK}. Moreover, since $\dim(\mathcal{H}_x) = 2$ for any $x \in M$, \eqref{eq:Omegasigma} and \eqref{eq:type(1,1)} follow for dimensional reasons.
\end{proof}

We now list some properties of $\mathfrak{t}$-invariant pluriclosed metrics.

\begin{proposition} \label{prop:gpluriclosed}
Let $g$ be a $\mathfrak{t}$-invariant Hermitian metric on $(M^4,J)$. Then, the following claims hold true.
\begin{itemize}
\item[i)] The metric $g$ is pluriclosed if and only if $\lambda$ is constant.
\item[ii)] If $g$ is pluriclosed, then its Lee form $\theta$ is given by
\begin{equation} \label{eq:theta}
\theta = \lambda (\sigma_1\,\mu_2 -\sigma_2\,\mu_1) \,\, .
\end{equation}
\item[iii)] If $g$ is pluriclosed, then ${\rm d}\theta = 0$ if and only if both $\sigma_1$, $\sigma_2$ are constant.
\item[iv)] If $g$ is pluriclosed, then the class of $\check{\omega}$ in the basic cohomology space $H^2_{\rm d}(M)_{\mathfrak{t}}$ is non-trivial.
\end{itemize}
\end{proposition}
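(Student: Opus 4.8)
The plan is to first fix an adapted complex coframe and then dispatch the four claims in turn, the two genuinely global statements (i) and (iv) being handled by an integration by parts on the compact manifold $M$. Set $\zeta := \mu_1 + i\mu_2$. Since $JV_1 = V_2$, a short check on the coframe dual to $(V_1,V_2)$ shows that $\zeta$ is of type $(1,0)$ and that $\mu_1\wedge\mu_2 = \tfrac{i}{2}\,\zeta\wedge\bar\zeta$. Combining \eqref{eq:Omegasigma} with $\Omega_i={\rm d}\mu_i$ gives ${\rm d}\zeta = (\sigma_1+i\sigma_2)\,\check\omega$, which is of type $(1,1)$ by \eqref{eq:type(1,1)}; hence $\partial\zeta = 0$ and $\bar\partial\zeta = (\sigma_1+i\sigma_2)\,\check\omega$, and similarly for $\bar\zeta$, while ${\rm d}\check\omega=0$ by \eqref{eq:transK}. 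The decisive structural feature, used throughout, is that $\mathcal{H}$ has complex dimension $1$: any wedge of horizontal forms of total degree $\geq 3$ vanishes, so in particular $\alpha\wedge\check\omega = 0$ for every horizontal $1$-form $\alpha$.

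For claim (i) I would compute ${\rm d}\omega$ from \eqref{eq:generalomega}, then its $(1,2)$-part $\bar\partial\omega$, and finally take the $(2,2)$-part of ${\rm d}(\bar\partial\omega)$, which equals $\partial\bar\partial\omega$. In this last step every term in which a horizontal $1$-form is wedged with $\check\omega$ drops by the dimension remark, and the whole computation collapses to $\partial\bar\partial\omega = \partial\bar\partial\lambda\wedge\mu_1\wedge\mu_2$. As $\wedge\,\mu_1\wedge\mu_2$ is injective on basic $(1,1)$-forms, $\omega$ is pluriclosed if and only if $\partial\bar\partial\lambda = 0$; if $\lambda$ is constant this is immediate. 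The converse is the main obstacle: since the leaf space $M/\mathfrak{t}$ need not be smooth, no transverse maximum principle is directly available, so I would argue globally. Using $\partial\bar\partial\lambda = 0$ one rewrites the nonnegative top form $i\,\partial\lambda\wedge\bar\partial\lambda\wedge\mu_1\wedge\mu_2$ as ${\rm d}\big(i\,\lambda\,\bar\partial\lambda\wedge\mu_1\wedge\mu_2\big)$, the correction term vanishing again because $\bar\partial\lambda\wedge\check\omega = 0$. Stokes' theorem on the compact $M$ then forces this form to vanish identically, whence $\partial\lambda=0$ and $\lambda$ is constant.

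Claims (ii) and (iii) are then direct consequences. With $\lambda$ constant one has ${\rm d}\omega = \lambda\,(\sigma_1\,\check\omega\wedge\mu_2 - \sigma_2\,\mu_1\wedge\check\omega)$, and I would verify that $\theta := \lambda(\sigma_1\mu_2 - \sigma_2\mu_1)$ satisfies ${\rm d}\omega = \theta\wedge\omega$: the purely vertical contribution $\theta\wedge\lambda\mu_1\wedge\mu_2$ drops because $\mu_i\wedge\mu_i=0$, and the remaining term reproduces ${\rm d}\omega$; uniqueness of the Lee form follows from the nondegeneracy of $\alpha\mapsto\alpha\wedge\omega$ on $1$-forms. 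Differentiating $\theta$, the two $\check\omega$-terms cancel since $\sigma_1\Omega_2 = \sigma_1\sigma_2\,\check\omega = \sigma_2\Omega_1$, leaving ${\rm d}\theta = \lambda(\,{\rm d}\sigma_1\wedge\mu_2 - {\rm d}\sigma_2\wedge\mu_1)$; contracting with $V_1$ and $V_2$ shows this vanishes precisely when ${\rm d}\sigma_1 = {\rm d}\sigma_2 = 0$, which is (iii).

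Finally, for claim (iv) I would argue by contradiction. If $\check\omega = {\rm d}\eta$ for a basic $1$-form $\eta$, then integrating by parts, $\int_M \mu_1\wedge\mu_2\wedge\check\omega = \int_M \mu_1\wedge\mu_2\wedge{\rm d}\eta = -\int_M {\rm d}(\mu_1\wedge\mu_2)\wedge\eta$, and the right-hand side vanishes because each term of ${\rm d}(\mu_1\wedge\mu_2) = \sigma_1\check\omega\wedge\mu_2 - \sigma_2\mu_1\wedge\check\omega$ contains the factor $\check\omega\wedge\eta = 0$. On the other hand the volume form equals $\lambda\,\mu_1\wedge\mu_2\wedge\check\omega$ with $\lambda$ a positive constant, so $\int_M \mu_1\wedge\mu_2\wedge\check\omega > 0$, a contradiction. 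Hence $[\check\omega]\neq 0$ in $H^2_{\rm d}(M)_{\mathfrak{t}}$.
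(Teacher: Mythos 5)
Your proof is correct and follows essentially the same route as the paper: the same splitting \eqref{eq:generalomega}, the same dimensional-reasons cancellations for horizontal forms, the same formula for $\theta$ and ${\rm d}\theta$, and the same exactness contradiction for (iv), merely rephrased through the complex coframe $\zeta=\mu_1+i\mu_2$. The only substantive difference is in the converse of (i), where the paper simply invokes compactness (pluriharmonic functions on a compact complex manifold are constant, by the maximum principle applied on $M$ itself --- no transverse theory is needed), while you give an explicit and equally valid Stokes argument.
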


\begin{proof}
By \eqref{eq:defOmega}, \eqref{eq:generalomega} and \eqref{eq:transK}, the exterior differential ${\rm d}\omega$ is given by
\begin{equation} \label{eq:dw}
{\rm d}\omega = {\rm d}\lambda \wedge \mu_1 \wedge \mu_2 +\lambda\,\Omega_1 \wedge \mu_2 -\lambda\,\mu_1 \wedge \Omega_2
\end{equation}
and then, by \eqref{eq:type(1,1)}, we obtain
\begin{equation} \label{eq:Jdw}
J{\rm d}\omega = J{\rm d}\lambda \wedge \mu_1 \wedge \mu_2 -\lambda\, \mu_1 \wedge \Omega_1 -\lambda\, \mu_2 \wedge \Omega_2 \,\, .
\end{equation}
We observe that 
\begin{equation} \label{eq:OmegaOmega=0}
\Omega_1 \wedge \Omega_1 = \Omega_2 \wedge \Omega_2 = 0
\end{equation}
for dimensional reasons. Moreover, since ${\rm d}\lambda$ is basic, we obtain
\begin{equation} \label{eq:dlOmega=0}
{\rm d}\lambda \wedge \Omega_1 = J{\rm d}\lambda \wedge \Omega_1 = {\rm d}\lambda \wedge \Omega_2 = J{\rm d}\lambda \wedge \Omega_2 = 0
\end{equation}
for dimensional reasons. Therefore, since $\Omega_i$ are closed, by \eqref{eq:Jdw}, \eqref{eq:OmegaOmega=0} and \eqref{eq:dlOmega=0}, it follows that 
$$
{\rm d}J{\rm d}\omega = {\rm d}J{\rm d}\lambda \wedge \mu_1 \wedge \mu_2 \,\, .
$$
Then, $g$ is pluriclosed if and only if ${\rm d}J{\rm d}\lambda = 0$. Being $M$ compact, ${\rm d}J{\rm d}\lambda = 0$ if and only if $\lambda$ is constant, and this concludes the proof of claim $i)$.

Assume now that $\lambda$ is constant. Then, by \eqref{eq:Omegasigma} and \eqref{eq:dw}, we get
$$
{\rm d}\omega = \lambda\,\sigma_1\,\mu_2 \wedge \check{\omega} -\lambda\,\sigma_2\,\mu_1 \wedge \check{\omega}
$$
and so
\begin{equation} \label{eq:dwplur}
{\rm d}\omega = \lambda (\sigma_1\,\mu_2 -\sigma_2\,\mu_1) \wedge \omega \,\, .
\end{equation}
By uniqueness of the Lee form, \eqref{eq:theta} follows from \eqref{eq:dwplur}, and this concludes the proof of claim $ii)$. Furthermore, by \eqref{eq:Omegasigma} and \eqref{eq:theta}, we get
$$\begin{aligned}
{\rm d}\theta &= \lambda ({\rm d}\sigma_1 \wedge \mu_2 +\sigma_1 \Omega_2 -{\rm d}\sigma_2 \wedge \mu_1 -\sigma_2 \Omega_1) \\
&= \lambda ({\rm d}\sigma_1 \wedge \mu_2 -{\rm d}\sigma_2 \wedge \mu_1)
\end{aligned}$$
and so this proves claim $iii)$.

Finally, assume by contradiction that there exists a basic $1$-form $\eta$ on $M$ such that ${\rm d}\eta = \check{\omega}$. Being $\Omega_i$ basic forms, we have
$$
\Omega_1 \wedge \eta = \Omega_2 \wedge \eta = 0
$$
for dimensional reasons. Therefore, by \eqref{eq:generalomega}, we obtain
$$
\omega \wedge \omega = 2\lambda\, \mu_1 \wedge \mu_2 \wedge {\rm d}\eta +{\rm d}\eta \wedge {\rm d}\eta = {\rm d}\Big( 2\lambda\, \mu_1 \wedge \mu_2 \wedge \eta + \eta \wedge {\rm d}\eta\Big) \,\, ,
$$
which is impossible, being $M$ $4$-dimensional and compact. This completes the proof of claim $iv)$.
\end{proof}

We are now able to characterize when $\mathfrak{t}$-invariant Hermitian metrics are Vaisman.

\begin{proposition} \label{prop:Vaisman}
Let $g$ be a $\mathfrak{t}$-invariant Hermitian metric on $(M^4,J)$. Then, $g$ is Vaisman if and only if $\lambda$, $\sigma_1$ and $\sigma_2$ are constant.
\end{proposition}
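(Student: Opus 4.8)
The plan is to reduce the Vaisman condition to the three statements already established in Proposition~\ref{prop:gpluriclosed}, together with one extra geometric input: the parallelism of the Lee form, which I will convert into a Killing-field condition on $\theta^\sharp$. Recall that in the present setting ($n=2$) the metric $g$ is Vaisman exactly when $\theta$ is Levi-Civita parallel, non-vanishing, and ${\rm d}\omega = \theta\wedge\omega$. I will treat the two implications separately, the reverse one being the substantial direction.

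For the forward implication, suppose $g$ is Vaisman. Since in complex dimension $2$ every Vaisman metric is pluriclosed, claim~$i)$ of Proposition~\ref{prop:gpluriclosed} gives that $\lambda$ is constant. Moreover, $\theta$ being Levi-Civita parallel forces ${\rm d}\theta(X,Y) = (\nabla_X\theta)(Y) - (\nabla_Y\theta)(X) = 0$, so ${\rm d}\theta = 0$, and claim~$iii)$ then yields that $\sigma_1$ and $\sigma_2$ are constant. This direction is immediate.

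For the reverse implication, assume $\lambda$, $\sigma_1$, $\sigma_2$ are constant. By claim~$i)$ the metric is pluriclosed, by \eqref{eq:theta}--\eqref{eq:dwplur} its Lee form is $\theta = \lambda(\sigma_1\,\mu_2 - \sigma_2\,\mu_1)$ and satisfies ${\rm d}\omega = \theta\wedge\omega$, and by claim~$iii)$ we have ${\rm d}\theta = 0$. It remains to show that $\theta$ is parallel and non-vanishing. The key computation is the identification of $\theta^\sharp$: since the $\mu_i$ annihilate $\mathcal{H}$, the form $\theta$ is vertical, so $\theta^\sharp \in \mathcal{V} = \mathrm{span}(V_1,V_2)$; using that $V_1,V_2$ are $g$-orthogonal with $g(V_1,V_1)=g(V_2,V_2)=\lambda$ (which follow from $J$-Hermiticity and $\lambda = \hat{g}(V_2,V_2)$) together with $\mu_i(V_j)=\delta_{ij}$, one finds $\theta^\sharp = -\sigma_2\,V_1 + \sigma_1\,V_2$. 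Because $g$ is $\mathfrak{t}$-invariant, $V_1$ and $V_2$ are Killing, and since $\sigma_1,\sigma_2$ are constants, the combination $\theta^\sharp$ is again Killing, i.e.\ $\mathcal{L}_{\theta^\sharp}g = 0$. Finally, for any closed $1$-form one has $\nabla\theta = \tfrac12\,\mathcal{L}_{\theta^\sharp}g$; with ${\rm d}\theta=0$ and $\theta^\sharp$ Killing this gives $\nabla\theta = 0$, so $\theta$ is parallel.

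The main obstacle is precisely this passage from the algebraic data $(\lambda,\sigma_1,\sigma_2)$ to the parallelism of $\theta$: the crucial observation is that $\theta^\sharp$ lies in $\mathfrak{t}$ with constant coefficients, whence it inherits the Killing property of the generators $V_i$, after which the elementary identity $\nabla\theta = \tfrac12\,\mathcal{L}_{\theta^\sharp}g$ for closed $\theta$ closes the argument. The one delicate point to flag is the non-vanishing of $\theta$: from $\theta^\sharp = -\sigma_2 V_1 + \sigma_1 V_2$ one computes $|\theta|^2 = \lambda(\sigma_1^2+\sigma_2^2)$, a positive constant as soon as $(\sigma_1,\sigma_2)\neq(0,0)$; the degenerate case $\sigma_1=\sigma_2=0$ forces ${\rm d}\omega=0$ by \eqref{eq:dwplur}, hence $g$ Kähler, a possibility incompatible with the (non-Kähler) surfaces carrying a genuine Vaisman structure and therefore excluded here.
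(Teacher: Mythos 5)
Your proof is correct, and in the substantial (``if'') direction it takes a genuinely different route from the paper. The paper verifies the identity $|\theta|^2\omega = \theta\wedge J\theta - {\rm d}J\theta$ together with the holomorphicity of the Lee field and then invokes the external criterion of Istrati \cite[Proposition 1]{MR3927157} to conclude that $g$ is Vaisman. You instead identify $\theta^{\sharp} = -\sigma_2 V_1 + \sigma_1 V_2$ explicitly, observe that a constant-coefficient combination of the Killing fields $V_i$ (Killing precisely because $g$ is $\mathfrak{t}$-invariant) is again Killing, and close with the elementary identity $\nabla\theta = \tfrac12\big({\rm d}\theta + \mathcal{L}_{\theta^{\sharp}}g\big)$, which kills $\nabla\theta$ once ${\rm d}\theta=0$. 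This is more self-contained and verifies the definition of Vaisman directly rather than through a characterization theorem; the paper's route is shorter on the page but outsources the real content to the cited criterion. The ``only if'' direction is essentially identical in both arguments (parallel $\Rightarrow$ closed and co-closed, pluriclosedness of $\lambda$, then claims $i)$ and $iii)$ of Proposition \ref{prop:gpluriclosed}).

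One remark on the degenerate case you flag: you are right that $\sigma_1=\sigma_2=0$ must be excluded, since then $\theta=0$ and $g$ is K\"ahler rather than Vaisman in the paper's sense (the statement as written is in fact vacuously problematic on, say, a complex torus with the flat metric, where all three quantities are constant). The paper's proof does not address this point at all, so your explicit treatment is a strength rather than a defect; just be aware that your exclusion rests on the implicit assumption that $(M,J)$ is non-K\"ahler, which is not among the stated hypotheses of the proposition but holds in the paper's intended application, where $\mathfrak{t}$ is generated by the Lee and anti-Lee fields of a genuine Vaisman metric and $\sigma_1\neq 0$.
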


\begin{proof}
Assume that $g$ is Vaisman. Then, since $\theta$ is Levi-Civita parallel, it is both closed and co-closed. Being $\theta$ co-closed, $g$ is pluriclosed for dimensional reasons. Then, by Proposition \ref{prop:gpluriclosed}, it follows that $\lambda$, $\sigma_1$ and $\sigma_2$ are constant.

Assume now that $\lambda$, $\sigma_1$ and $\sigma_2$ are constant. Then, a direct computation based on \eqref{eq:defOmega}, \eqref{eq:Omegasigma} and \eqref{eq:theta} shows that
\begin{gather}
\theta \wedge J\theta = \lambda^2(\sigma_1^2 +\sigma_2^2)\,\mu_1 \wedge \mu_2 \,\, , \label{eq:wVais1} \\
{\rm d}J\theta = -\lambda\,(\sigma_1^2 +\sigma_2^2)\,\check{\omega} \,\, . \label{eq:wVais2}
\end{gather}
Since $g(\mu_1,\mu_1) = g(\mu_2,\mu_2) = \lambda^{-1}$ and $g(\mu_1,\mu_2) = 0$, from \eqref{eq:theta} we have
\begin{equation} \label{eq:wVais3}
|\theta|^2 = \lambda\,(\sigma_1^2 +\sigma_2^2) \,\, .
\end{equation}
Therefore, by \eqref{eq:generalomega}, \eqref{eq:wVais1}, \eqref{eq:wVais2} and \eqref{eq:wVais3} we get
\begin{equation} \label{eq:potential}
|\theta|^2\omega = \theta \wedge J\theta -{\rm d}J\theta \,\, .
\end{equation}
Since $\lambda$, $\sigma_1$ and $\sigma_2$ are constant, \eqref{eq:theta} implies that the Lee vector field of $g$ is holomorphic. Then, by \cite[Proposition 1]{MR3927157} and \eqref{eq:potential}, $g$ is Vaisman. This concludes the proof.
\end{proof}

\begin{remark}
Notice that characterizing Vaisman metrics among $\mathfrak{t}$-invariant Hermitian metrics is not restrictive. Indeed, by \cite[Corollary 2.7]{MR1485384}, all the Vaisman metrics on a compact complex surface are invariant under the same $\mathfrak{t}$-action.
\end{remark}

\begin{remark}
We recall that a Hermitian metric on a compact complex surface is called {\it locally conformally K{\"a}hler} ({\it LCK} for short) if its Lee form $\theta$ is closed. Therefore, by Proposition \ref{prop:gpluriclosed} and Proposition \ref{prop:Vaisman}, it follows that a $\mathfrak{t}$-invariant Hermitian metric $g$ on a compact complex surface is Vaisman if and only if it is pluriclosed and LCK. While on compact surfaces the Vaisman condition implies $g$ being both pluriclosed and LCK, the converse generally is not true. Indeed, up to conformal change, a LCK metric on a compact surface can be chosen to be pluriclosed, but not all LCK surfaces admit a Vaisman metric.
\end{remark}

Finally, we recall that the Bismut Ricci form of Vaisman surfaces is horizontal. Namely, the following result holds true.

\begin{lemma}[c.f.\ Lemma 3.13 of \cite{MR4480223} and (4.4) of \cite{BPT24}]
Let $g$ be a $\mathfrak{t}$-invariant Hermitian metric on $(M^4,J)$. If $g$ is Vaisman, then its Bismut Ricci form $\rho$ and its Bismut scalar curvature $s$ verify
\begin{equation} \label{eq:rhohor}
\rho = s\, \check{\omega} \,\, .
\end{equation}
\end{lemma}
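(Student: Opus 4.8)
The plan is to reduce the statement to the assertion that $\rho$ is a \emph{basic} form of type $(1,1)$, and then to identify the proportionality factor by a trace computation. First I would note that $\rho$ is $\mathfrak{t}$-invariant: by assumption the elements of $\mathfrak{t}$ are holomorphic and Killing, so they preserve both $J$ and $g$, hence the Bismut connection and therefore its Ricci form, giving $\mathcal{L}_V\rho = 0$ for all $V \in \mathfrak{t}$. Granting, in addition, that $\rho$ is horizontal and of type $(1,1)$ --- so that $\rho$ is basic --- one argues as follows. The horizontal distribution $\mathcal{H}$ has real rank $2$, hence carries a $1$-dimensional space of $(1,1)$-forms at each point, spanned by $\check{\omega}$; thus $\rho = f\,\check{\omega}$ for a $\mathfrak{t}$-invariant function $f$. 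To see that $f = s$, I would contract with $\omega$: from \eqref{eq:generalomega}, using $\check{\omega} \wedge \check{\omega} = 0$ and $\mu_1 \wedge \mu_2 \wedge \mu_1 \wedge \mu_2 = 0$ for dimensional reasons, one gets $\tfrac12 \omega \wedge \omega = \lambda\,\mu_1 \wedge \mu_2 \wedge \check{\omega}$ and $\check{\omega} \wedge \omega = \lambda\,\mu_1 \wedge \mu_2 \wedge \check{\omega} = \tfrac12 \omega \wedge \omega$, so that $\Lambda_\omega \check{\omega} = 1$. Since $s = \Lambda_\omega \rho$ by the normalization of the Bismut scalar curvature, this yields $f = s$ and hence \eqref{eq:rhohor}.

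It remains to prove that $\rho$ is basic of type $(1,1)$, which is the heart of the matter. Here I would compute $\rho$ explicitly from the reduction, using a formula that expresses the $(1,1)$-Bismut--Ricci form as the Chern--Ricci form $\rho^C$ corrected by torsion terms built out of the Lee form (as in the cited references). By Proposition \ref{prop:Vaisman} the Vaisman hypothesis forces $\lambda$, $\sigma_1$, $\sigma_2$ to be constant, so \eqref{eq:theta} gives $\theta = \lambda(\sigma_1\,\mu_2 - \sigma_2\,\mu_1)$ with ${\rm d}\theta = 0$, while \eqref{eq:wVais2} gives ${\rm d}J\theta = -\lambda(\sigma_1^2 + \sigma_2^2)\,\check{\omega}$. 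Consequently every torsion correction entering $\rho$ either vanishes or is a constant multiple of $\check{\omega}$; in particular all of them are basic $(1,1)$-forms.

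The main obstacle is therefore to show that the Chern--Ricci part of $\rho$ is itself basic. I would handle this by working in holomorphic coordinates adapted to the splitting $TM = \mathcal{V} \oplus \mathcal{H}$ into the two complex lines $\mathcal{V}$ (the $\mathfrak{t}$-orbit directions) and $\mathcal{H}$: since the vertical coefficient $\lambda$ is constant, the determinant of $g$ factorizes, up to a constant, through the transverse metric coefficient, so that $\rho^C$ --- locally a constant multiple of ${\rm d}J{\rm d}\log\det g$ --- reduces to the transverse Chern--Ricci form together with curvature contributions proportional to $\Omega_i = \sigma_i\,\check{\omega}$ by \eqref{eq:Omegasigma}. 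As the transverse geometry is complex $1$-dimensional, each of these is again a multiple of $\check{\omega}$, whence $\rho^C$ is basic of type $(1,1)$. Collecting the two contributions shows that $\rho$ is basic of type $(1,1)$, and the first paragraph then finishes the proof. The delicate point in carrying this out rigorously is the construction of the adapted holomorphic coordinates and the careful bookkeeping of the connection terms $\mu_i$ that mix the vertical and horizontal directions.
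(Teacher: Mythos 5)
The paper offers no proof of this lemma at all: it is imported verbatim from Lemma~3.13 of \cite{MR4480223} and formula (4.4) of \cite{BPT24}, where it is obtained by a direct computation of the Bismut curvature exploiting the $\nabla^B$-parallelism of the Lee and anti-Lee vector fields (equivalently, the parallel Bismut torsion) and the pair symmetry of the curvature of a metric connection with parallel skew torsion. Your argument is therefore a genuinely different, self-contained route through the dimensional reduction of Section~\ref{sect:prel}: reduce to showing that $\rho$ is basic, note that basic $2$-forms on the rank-$2$ horizontal distribution are pointwise multiples of $\check{\omega}$, and identify the factor by tracing (the identification $f=s$ is forced by whichever normalization makes $\rho=s\,\check{\omega}$ consistent, since $\Lambda_\omega\check{\omega}=1$ as you compute). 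The only step you leave genuinely open --- basicity of the Chern--Ricci part --- closes up more cleanly than you anticipate: $W\coloneqq V_1-iJV_1$ is a nowhere-vanishing holomorphic vector field, so the holomorphic flow-box theorem gives coordinates with $\partial/\partial z^1$ proportional to $W$; writing $\mu_1+i\mu_2=dz^1+a\,dz^2$ (a $(1,0)$-form, since $\mathcal{H}$ is $J$-invariant) and $\check{\omega}=ic\,dz^2\wedge d\bar{z}^2$, the cross terms cancel exactly and $\det(g_{i\bar j})=\tfrac{\lambda}{2}\,c$ with $c$ basic, whence $\rho^C=-i\partial\bar\partial\log c$ is basic of type $(1,1)$ with no residual $\Omega_i$-contributions. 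Two points you should make explicit to finish: state precisely the exact identity relating $\rho$ to $\rho^C$ and ${\rm d}J\theta$ (it holds for the full first Bismut Ricci form, not only its $(1,1)$-part, and together with \eqref{eq:wVais2} it also justifies your implicit claim that $\rho=\rho^{1,1}$ here), and record the convention $s=\Lambda_\omega\rho$, which the paper never writes down.
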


\subsection{Pluriclosed flow solutions that are Vaisman for all times} \hfill \par

Let $(M^4, J, \omega_0)$ be a compact Vaisman surface, $\{\omega(t)\}_{t \in [0,T)}$ the solution to the pluriclosed flow starting from $\omega_0$ and denote by $g(t)$ the associated symmetric tensors. We denote by $\theta(t)$ the Lee form of $\omega(t)$. Since $\omega_0$ is Vaisman, the vector fields 
\begin{equation} \label{eq:choiceVi}
V_1 \coloneqq J\theta(0)^{\sharp_{g(0)}} \,\, , \quad V_2 \coloneqq -\theta(0)^{\sharp_{g(0)}}
\end{equation}
generate an abelian, $2$-dimensional Lie algebra $\mathfrak{t}$ of holomorphic vector fields on $(M,J)$ that is free and $J$-invariant. Moreover, being $\omega_0$ $\mathfrak{t}$-invariant, it follows that $\omega(t)$ is $\mathfrak{t}$-invariant for all $t \in [0,T)$. Accordingly, we write
\begin{equation} \label{eq:generalomega(t)}
\omega(t) = \lambda(t)\,\mu_1(t) \wedge \mu_2(t) +\check{\omega}(t) \,\, ,
\end{equation}
where $\lambda(t) = g(t)(V_1,V_1)$, $\mu_i(t)$ are the components of the principal connection $\mu(t)$ induced by $g(t)$ with respect to the basis $(V_1,V_2)$ and $\check{\omega}(t)$ is the restriction of $\omega(t)$ to $\mathcal{H}(t) = \mathcal{V}^{\perp_{g(t)}}$. We also denote by $s(t)$ the Bismut scalar curvature of $\omega(t)$, by $\Omega_i(t) = {\rm d}\mu_i(t)$ the curvature $2$-forms of $\mu(t)$ and by $\sigma_i(t)$ the functions defined in \eqref{eq:Omegasigma}.

\begin{proof}[Proof of Theorem \ref{thm:MAIN}]
If $\omega_0$ has constant scalar curvature, then $\omega(t)$ is Vaisman for all $t \in [0,T)$ by \cite[Theorem B]{MR4257077}.
Assume now that $\omega(t)$ is Vaisman for all $t \in [0,T)$. According to Proposition \ref{prop:Vaisman}, $\lambda(t)$, $\sigma_1(t)$ and $\sigma_2(t)$ are constant on $M$ for all $t \in [0,T)$. Moreover, by \cite[Proposition 7.5]{MR1362865}, the basic cohomology classes $[\Omega_1(t)]$, $[\Omega_2(t)]$ do not depend on $t$, and so, by \eqref{eq:choiceVi}, we obtain:
\begin{align}
\sigma_1(t) \neq 0 \quad \text{for all $t \in [0,T)$} \,\, , \label{eq:sigma1t} \\
\sigma_2(t) = 0 \quad \text{for all $t \in [0,T)$} \,\, . \label{eq:sigma2t}
\end{align}
By the evolution equation of the pluriclosed flow \eqref{eq:PCflow}, \eqref{eq:generalomega(t)} and \eqref{eq:rhohor}, we get
\begin{gather}
\partial_t\, \big(\lambda(t)\,\mu_1(t) \wedge \mu_2(t)\big) = 0 \,\, , \label{eq:splitflow1} \\
\partial_t\, \check{\omega}(t) = -s(t)\, \check{\omega}(t) \,\, . \label{eq:splitflow2}
\end{gather}
By \eqref{eq:splitflow1}, we get
$$
\lambda'(t)\,\mu_1(t) \wedge \mu_2(t) +\lambda(t)\big(\partial_t\mu_1(t) \wedge \mu_2(t) +\mu_1(t) \wedge \partial_t\mu_2(t)\big) = 0
$$
and so, since
$$
g(t)\big(\mu_1(t) \wedge \mu_2(t), \mu_1(t) \wedge \mu_2(t)\big) = \lambda(t)^{-2} \,\, ,
$$
$$
g(t)\big(\partial_t\mu_1(t) \wedge \mu_2(t), \mu_1(t) \wedge \mu_2(t)\big) = \lambda(t)^{-1} g(t)\big(\partial_t\mu_1(t), \mu_1(t)\big) \,\, ,
$$
we obtain
\begin{equation} \label{eq:lambda'}
g(t)\big(\partial_t\mu_1(t),\mu_1(t)\big) = -\tfrac12\lambda'(t)\lambda(t)^{-2} \,\, .
\end{equation}
Moreover, since ${\rm d}$ and $\partial_t$ commute, by \eqref{eq:sigma2t}, \eqref{eq:splitflow1} and \eqref{eq:splitflow2}, we obtain
$$\begin{aligned}
0 &= \partial_t\, {\rm d} \big(\lambda(t)\,\mu_1(t) \wedge \mu_2(t)\big) \\
&= \partial_t\, \big(\lambda(t)\,\Omega_1(t) \wedge \mu_2(t) -\lambda(t)\,\mu_1(t) \wedge \Omega_2(t)\big) \\
&= \partial_t\, \big(\lambda(t)\,\sigma_1(t)\,\mu_2(t) \wedge \check{\omega}(t)\big) \\
&= \lambda'(t)\,\sigma_1(t)\,\mu_2(t) \wedge \check{\omega}(t)
+\lambda(t)\big(\sigma'_1(t)\,\mu_2(t) +\sigma_1(t)\,\partial_t\mu_2(t)\big) \wedge \check{\omega}(t) +\lambda(t)\,\sigma_1(t)\,\mu_2(t) \wedge \partial_t\check{\omega}(t) \\
&= \Big(\lambda'(t)\,\sigma_1(t)\,\mu_2(t) +\lambda(t)\big(\sigma'_1(t)\,\mu_2(t) +\sigma_1(t)\,\partial_t\mu_2(t) -\sigma_1(t)\,s(t)\,\mu_2(t) \big)\Big) \wedge \check{\omega}(t)
\end{aligned}$$
Therefore, since $\check{\omega}(t) \neq 0$, we get
\begin{equation} \label{eq:angle12}
\lambda'(t)\,\sigma_1(t)\,\mu_2(t) +\lambda(t)\big(\sigma'_1(t)\,\mu_2(t) +\sigma_1(t)\,\partial_t\mu_2(t) -\sigma_1(t)\,s(t)\,\mu_2(t) \big) = 0 \,\, .
\end{equation}
By contracting with $\mu_2(t)$ and recalling that $g(t)(\mu_i(t),\mu_i(t)) = \lambda(t)^{-1}$, we get
$$
\lambda'(t)\lambda(t)^{-1}\sigma_1(t) +\sigma'_1(t) +\lambda(t)\sigma_1(t)\,g(t)(\partial_t\mu_2(t), \mu_2(t)) -\sigma_1(t)s(t) = 0 \,\, .
$$
Finally, by \eqref{eq:lambda'}, we obtain
\begin{equation} \label{eq:lambda''}
\tfrac12\lambda'(t)\,\sigma_1(t) +\lambda(t)\,\sigma'_1(t) -\lambda(t)\sigma_1(t)s(t) = 0 \,\, .
\end{equation}
Being $\lambda(t)$ and $\sigma_1(t)$ constant and non-vanishing on $M$, it follows that $s(t)$ is constant on $M$ for all $t \in [0,T)$, and this concludes the proof.
\end{proof}

With these techniques based on dimensional reduction and the independence of characteristic classes, it is possible to further describe the evolution of a Vaisman metric with constant scalar curvature on a compact surface along the pluriclosed flow. More precisely, the following computations show that the flow only affects the transversal geometry. To see this, assume that $\omega(t)$ is a solution of the pluriclosed flow that is Vaisman for all $t \in [0,T)$. Then, since the basic cohomology class $[\Omega_1(t)]$ does not depend on $t$, we get
$$\begin{aligned}
0 &= \partial_t [\Omega_1(t)] \\
&= \big[\partial_t (\sigma_1(t)\,\check{\omega}(t))\big] \\
&= \sigma_1'(t)[\check{\omega}(t)] +\sigma_1(t)[\partial_t\,\check{\omega}(t)] \\
&= \sigma_1'(t)[\check{\omega}(t)] -\sigma_1(t)s(t)[\check{\omega}(t)] \,\, .
\end{aligned}$$
Since $[\check{\omega}(t)] \neq 0$ by Proposition \ref{prop:gpluriclosed}, we get
\begin{equation} \label{eq:sigma1'}
\sigma_1'(t) =\sigma_1(t)\,s(t) \,\, .
\end{equation}
Therefore, by \eqref{eq:sigma1t}, \eqref{eq:lambda''} and \eqref{eq:sigma1'}, we obtain
$$
\lambda'(t) = 0
$$
and so $\lambda(t)$ is also constant in time. Finally, by \eqref{eq:splitflow1}, we get
$$
\partial_t\mu_1(t) \wedge \mu_2(t) +\mu_1(t) \wedge \partial_t\mu_2(t) = 0
$$
and so, by \eqref{eq:lambda'} and \eqref{eq:angle12}, we conclude that 
$$
\partial_t\mu_1(t) = \partial_t\mu_2(t) = 0 \,\, .
$$
Therefore, the principal connection $\mu$ and the horizontal distribution $\mathcal{H}$ do not depend on $t$.

\begin{remark}
The fact that $\mu$ and $\mathcal{H}$ do not vary with time for a Vaisman solution to the pluriclosed flow can also be deduced by the construction of an explicit solution as in \cite[Proof of Theorem B]{MR4257077}.
\end{remark}


\begin{thebibliography}{99}

\bibitem{MR1362865} \textsc{D.\ V.\ Alekseevsky, P.\ W.\ Michor}, {\it Differential geometry of $\mathfrak{g}$-manifolds}, Differential Geom.\ Appl.\ {\bf 5} (1995), 371--403.

\bibitem{MR4480223} \textsc{A.\ Andrada, R.\ Villacampa}, {\it Bismut connection on Vaisman manifolds}, Math.\ Z.\ {\bf 302} (2022), 1091--126.

\bibitem{MR4564028} \textsc{D.\ Angella, A.\ Otal, L.\ Ugarte, R.\ Villacampa}, {\it On Gauduchon connections with K\"ahler-like curvature}, Comm.\ Anal.\ Geom.\ {\bf 30} (2022), 961--1006.
	
\bibitem{angella-otiman} \textsc{D.\ Angella, A.\ Otiman}, {\it A note on compatibility of special Hermitian structures}, \texttt{arXiv:2306.02981}.

\bibitem{MR3957836} \textsc{R.\ M.\ Arroyo, R.\ A.\ Lafuente}, {\it The long-time behavior of the homogeneous pluriclosed flow}, Proc.\ Lond.\ Math.\ Soc.\ {\bf 119} (2019), 266--289.

\bibitem{MR4288257} \textsc{G.\ Barbaro}, {\it Griffiths positivity for Bismut curvature and its behaviour along Hermitian curvature flows}, J.\ Geom.\ Phys.\ {\bf 169} (2021), paper no.\ 104323.

\bibitem{Barb22} \textsc{G.\ Barbaro}, {\it Global Stability of the Pluriclosed Flow on Compact Simply Connected Simple Lie Groups of Rank Two}, Transform.\ Groups (2022), \texttt{doi.org/10.1007/s00031-022-09761-5}.

\bibitem{barbaro2023bismut} \textsc{G.\ Barbaro}, {\it Bismut Hermitian Einstein metrics and the stability of the pluriclosed flow}, \texttt{arXiv:2307.10207}.
	
\bibitem{BPT24} \textsc{G.\ Barbaro, F.\ Pediconi, N.\ Tardini}, {\it Pluriclosed manifolds with parallel Bismut torsion}, \texttt{arXiv:2406.07039}.

\bibitem{MR1006380} \textsc{J.\ M.\ Bismut}, {\it A local index theorem for non-K\"ahler manifolds}, Math.\ Ann.\ {\bf 284} (1989), 681--699.

\bibitem{MR2535001} \textsc{X.\ Chen, S.\ Sun, G.\ Tian}, {\it A note on K{\"a}hler-Ricci soliton}, Int.\ Math.\ Res.\ Not.\ IMRN (2009), 3328--3336.  

\bibitem{MR1760667} \textsc{F.\ Belgun}, {\it On the metric structure of non-K\"ahler complex surfaces}, Math.\ Ann.\ {\bf 317} (2000), 1--40.

\bibitem{MR3511471} \textsc{J.\ Boling}, {\it Homogeneous solutions of pluriclosed flow on closed complex surfaces}, J.\ Geom.\ Anal.\ {\bf 26} (2016), 2130--2154.

\bibitem{MR0799272} \textsc{H.\ D.\ Cao}, {\it Deformation of K{\"a}hler metrics to K{\"a}hler-Einstein metrics on compact K{\"a}hler manifolds}, Invent.\ Math.\ {\bf 81} (1985), 359--372.

\bibitem{MR4732911} \textsc{A.\ Fino, G.\ Grantcharov, E.\ Perez}, {\it The pluriclosed flow for $T^2$-invariant Vaisman metrics on the Kodaira-Thurston surface}, J.\ Geom.\ Phys.\ {\bf 201} (2024), paper no.\ 105197.

\bibitem{MR4760212} \textsc{A.\ Fino, G.\ Grantcharov, E.\ Perez}, {\it Corrigendum to ``The pluriclosed flow for $T^2$-invariant Vaisman metrics on the Kodaira-Thurston surface''}, J.\ Geom.\ Phys.\ {\bf 203} (2024), paper no.\ 105254. 
	
\bibitem{MR4257077} \textsc{A.\ Fino, N.\ Tardini}, {\it Some remarks on Hermitian manifolds satisfying K\"ahler-like conditions}, Math.\ Z.\ {\bf 298} (2021), 49--68.

\bibitem{FLS24} \textsc{E.\ Fusi, R.\ A.\ Lafuente, J.\ Stanfield}, {\it The homogeneous generalized Ricci flow} (2024), \texttt{arXiv:2404.15749}.

\bibitem{MR4687765} \textsc{E.\ Fusi, L.\ Vezzoni}, {\it On the pluriclosed flow on Oeljeklaus-Toma manifolds}, Canad.\ J.\ Math.\ {\bf 76} (2024), 39--65.

\bibitem{MR4629758} \textsc{M.\ Garcia-Fernandez, J.\ Jordan, J.\ Streets}, {\it Non-K{\"a}hler Calabi-Yau geometry and pluriclosed flow}, J.\ Math.\ Pures Appl.\ {\bf 177} (2023), 329--367.

\bibitem{MR0470920} \textsc{P.\ Gauduchon}, {\it Le th{\'e}or{\`e}me de l'excentricit{\'e} nulle}, C.\ R.\ Acad.\ Sci.\ Paris S{\'e}r.\ A-B {\bf 285} (1977), A387--A390.

\bibitem{MR3927157} \textsc{N.\ Istrati}, {\it Existence criteria for special locally conformally K{\"a}hler metrics}, Ann.\ Math.\ Pura Appl.\ {\bf 198} (2019), 335--353.

\bibitem{NiZh24} \textsc{L.\ Ni, F.\ Zheng}, {\it A classification of locally Chern homogeneous Hermitian manifolds}, \texttt{arXiv:2301.00579}.

\bibitem{MR4771164} \textsc{L.\ Ornea, M.\ Verbitsky}, Principles of locally conformally K{\"a}hler geometry, {\it Birkh{\"a}user/Springer}, {\it Cham}, 2024.

\bibitem{MR4181011} \textsc{J.\ Streets}, Pluriclosed flow and the geometrization of complex surfaces, in Geometric analysis -- in honor of Gang Tian's 60th birthday, {\it Birkh{\"a}user Springer, Cham}, 2020, 471--510.

\bibitem{MR4348696}  \textsc{J.\ Streets}, {\it Ricci-Yang-Mills flow on surfaces and pluriclosed flow on elliptic fibrations}, Adv.\ Math.\ {\bf 394} (2022), paper no. 108127.

\bibitem{MR3462132} \textsc{J.\ Streets}, {\it Pluriclosed flow, Born-Infeld geometry, and rigidity results for generalized K{\"a}hler manifolds}, Comm.\ Partial Differential Equations {\bf 41} (2016), 318--374.

\bibitem{MR2673720} \textsc{J.\ Streets, G.\ Tian}, {\it A parabolic flow of pluriclosed metrics}, Int.\ Math.\ Res.\ Not.\ IMRN (2010), 3101--3133. 

\bibitem{MR3110582} \textsc{J.\ Streets, G.\ Tian}, {\it Regularity results for pluriclosed flow}, Geom.\ Topol.\ {\bf 17} (2013), 2389--2429.

\bibitem{MR1485384} \textsc{K.\ Tsukada}, {\it Holomorphic maps of compact generalized Hopf manifolds}, Geom.\ Dedicata {\bf 68} (1997), 61--71.

\bibitem{MR4030526} \textsc{Y.\ Ustinovskiy} {\it The Hermitian curvature flow on manifolds with non-negative Griffiths curvature}, Amer.\ J.\ Math.\ {\bf 141} (2019), 1751--1775.

\bibitem{MR4554474} \textsc{Q.\ Zhao, F.\ Zheng}, {\it Strominger connection and pluriclosed metrics}, J.\ Reine Angew.\ Math.\ {\bf 796} (2023), 245--267.

\end{thebibliography}
\end{document}